%
%
%
%
%
%
%
%
%
%
%

\documentclass[11pt, reqno]{amsart}
\usepackage{version}
\usepackage{amssymb}
\usepackage{color}
\setlength{\textheight}{20cm} \textwidth16cm \hoffset=-2truecm

\newtheorem{theorem}{Theorem}[section]
\newtheorem{lemma}[theorem]{Lemma}
\newtheorem{corollary}[theorem]{Corollary}
\newtheorem{prop}[theorem]{Proposition}

\theoremstyle{definition}

\newtheorem{sublemma}[theorem]{Sublemma}

\newcommand{\B}{\mathbb{B}}
\newcommand{\C}{\mathbb{C}}

\newcommand{\N}{\mathbb{N}}

\newcommand{\R}{\mathbb{R}}

\newcommand{\bT}{\mathbb{T}}

\numberwithin{equation}{section}

%
%
%
%

\begin{document}
\title[Polynomially convex hulls]{Totally real embeddings with prescribed polynomial hulls}
\author[L. Arosio]{Leandro Arosio$^{\dag}$}

\author[E. F. Wold]{Erlend Forn\ae ss Wold$^{\dag\dag}$}
\thanks{$^{\dag\dag}$Supported by NRC grant no. 240569 }
\thanks{$^{\dag}$Supported by the SIR grant ``NEWHOLITE - New methods in holomorphic iteration'' no. RBSI14CFME}
\thanks{Part of this work was done during the international research program "Several Complex Variables and Complex Dynamics"
at the Center for Advanced Study at the Academy of Science and Letters in Oslo during the academic year 2016/2017. }
\address{ L. Arosio: Dipartimento Di Matematica\\
Universit\`{a} di Roma \textquotedblleft Tor Vergata\textquotedblright\ \\
Via Della Ricerca Scientifica 1, 00133 \\
Roma, Italy} \email{arosio@mat.uniroma2.it}
\address{E. F. Wold: Department of Mathematics\\
University of Oslo\\
Postboks 1053 Blindern, NO-0316 Oslo, Norway}\email{erlendfw@math.uio.no}

%
%
\subjclass[2010]{32E20}
\date{\today}
\keywords{Polynomial Convexity, Totally Real Manifolds}

\begin{abstract}
We embed compact $C^\infty$ manifolds into $\mathbb C^n$ as totally real manifolds with prescribed polynomial hulls.  
As a  consequence we show  that any compact $C^\infty$ manifold of dimension $d$ admits a totally real embedding into $\mathbb C^{\lfloor \frac{3d}{2}\rfloor}$
with non-trivial polynomial hull without complex structure.  
\end{abstract}

\maketitle

\section{Introduction}

We will prove the following result.  

\begin{theorem}\label{main}
Let $X\subset\mathbb C^m$ be a totally real $C^\infty$ submanifold of dimension $d>1$ and let {\bf $K\subset X$} be a compact subset.   Let
$M$ be a compact $C^\infty$  manifold of dimension $d$ (possibly with boundary) and assume that there exists a $C^\infty$ embedding $\phi:X\rightarrow M$.  
Then there exists a totally real $C^\infty$ embedding $\psi:M\rightarrow \mathbb C^{m+\ell}$, where $\ell={\max\{\lfloor \frac{3d}{2}\rfloor-m,0\}}$, such that
\begin{itemize}
\item[(i)] $\psi\circ\phi=\mathrm{id}$ on $K$, and 
\item[(ii)] $\widehat{\psi(M)}=\psi(M)\cup\widehat K$.
\end{itemize} 
\end{theorem}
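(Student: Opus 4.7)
The plan is to split the construction into three stages: smoothly extend $\phi^{-1}$ from $\phi(X)\subset M$ to all of $M$; perturb this extension into a totally real embedding using the extra coordinates; and finally refine the perturbation so that the polynomial hull of the image is exactly $\psi(M)\cup\widehat{K}$.

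For the first stage, identify $\mathbb{C}^m$ with $\mathbb{C}^m\times\{0\}\subset\mathbb{C}^{m+\ell}$, and, using a tubular neighborhood of the embedded submanifold $\phi(X)\subset M$ together with the Whitney extension theorem, produce a $C^\infty$ map $\psi_0\colon M\to\mathbb{C}^{m+\ell}$ with $\psi_0\circ\phi=\mathrm{id}$ on $X$, and in particular on $K$. Since total realness is an open condition and $X$ is totally real, $\psi_0$ is then a totally real immersion on some open neighborhood $U$ of $\phi(K)$ in $M$.

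For the second stage, fix an open $V$ with $\phi(K)\subset V\Subset U$ and perturb $\psi_0$, with support in $M\setminus\overline V$, to a totally real $C^\infty$ embedding $\psi\colon M\to\mathbb{C}^{m+\ell}$. The target dimension $m+\ell\geq\lfloor 3d/2\rfloor$ is the classical threshold above which Gromov/Haefliger/Forstneri\v{c}-type $h$-principle arguments guarantee that any smooth map which is already a totally real embedding on an open set can be perturbed, off that set, into a totally real embedding of the whole manifold; the $\ell$ extra coordinates supply precisely the required room when $m<\lfloor 3d/2\rfloor$.

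The main obstacle is the third stage. The strategy is to build $\psi$ inductively over the handles of a handle decomposition of $M$ relative to $\overline V$. At the $k$-th stage one has a totally real embedding of a subcomplex $M_k\subset M$ whose image satisfies $\widehat{\psi(M_k)}=\psi(M_k)\cup\widehat{K}$, and the next handle must be attached by a totally real map so that the enlarged union together with $\widehat K$ is still polynomially convex. The key technical input needed is an attachment lemma in the spirit of Stolzenberg / Forstneri\v{c}--Wold: a totally real $d$-disc attached to a polynomially convex set $L$ along a totally real subset of $L$ can be modified by an arbitrarily small $C^\infty$-perturbation, supported off the attaching locus, so that the union of $L$ with the perturbed disc remains polynomially convex. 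The codimension $(m+\ell)-d\geq\lceil d/2\rceil$ is what gives enough freedom to realise such a perturbation while staying inside the space of totally real embeddings. This inductive hull-control step, rather than the smooth extension or the totally real embedding step, is where the real work lies.
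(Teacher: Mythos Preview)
Your first two stages are close to what the paper does, though the paper dispenses with $h$-principle machinery: since $m+\ell\ge\lfloor 3d/2\rfloor$, a \emph{generic} smooth map $M\to\mathbb C^{m+\ell}$ is a totally real embedding (Whitney plus the Forstneri\v{c}--Rosay transversality lemma), so one simply perturbs an extension of $\phi^{-1}$ globally, then cuts off near $\phi(K)$ to restore $\psi\circ\phi=\mathrm{id}$ on $K$, and perturbs once more to recover injectivity. No relative $h$-principle is invoked.

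The substantive divergence is in stage three. The paper does \emph{not} attach handles one at a time. Instead it isolates the entire hull-control problem into a single perturbation statement (Theorem~\ref{nohull}): if $d<n$, $K\subset\mathbb C^n$ is polynomially convex and $f\colon M\to\mathbb C^n$ is any totally real embedding, then for every $k,\varepsilon$ there is a $C^k$-close totally real embedding $f_\varepsilon$ with $f_\varepsilon=f$ on $f^{-1}(K)$ and $K\cup f_\varepsilon(M)$ polynomially convex. One then applies this once, to the embedding produced in stage two, with $\widehat K$ in the role of $K$. Note that only codimension $1$ is required here, not $\lceil d/2\rceil$; the threshold $\lfloor 3d/2\rfloor$ is used solely for the totally real embedding, not for the hull step.

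Your attachment lemma is essentially the special case of Theorem~\ref{nohull} with $M$ a disc, so the handle decomposition does not sidestep the difficulty---you would still have to prove that lemma, and iterating it over handles is strictly more bookkeeping than applying the global result once. The actual proof of Theorem~\ref{nohull} (via the paper's main Lemma~\ref{littlehull}, going back to L{\o}w--Wold) uses ingredients your outline does not mention: one dices $M$ into small cubes, uses Anders\'en--Lempert theory to push each cube (after perturbation) into the zero set of an entire function via a holomorphic automorphism of $\mathbb C^n$, and then argues with Jensen measures that any point of the hull is captured by at most $n$ such zero sets together with a thin grid whose hull is already controlled inductively on dimension. Without this mechanism---or an equivalent one---stage three remains a restatement of the problem rather than a proof.
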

Notice that by a slight abuse of notation we denote the compact set $K\times \{0\} \subset \C^{m+\ell}$ also by the letter $K$.

Our motivation for proving this result is the following recent result by Izzo and Stout.
\begin{theorem}[Izzo--Stout  \cite{IzzoStout}]\label{IS}
Let $M$ be a compact $C^\infty$ surface (possibly with boundary). Then $M$
embeds into $\mathbb C^3$ as a totally real  $C^\infty$ surface $\Sigma$ such that 
$\widehat\Sigma\setminus \Sigma$ is not empty and does not contain any analytic disc.
\end{theorem}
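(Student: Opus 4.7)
The plan is to derive Theorem \ref{IS} from Theorem \ref{main} by producing a ``seed'' configuration. Concretely, one needs a totally real $C^\infty$ surface $X \subset \mathbb{C}^m$ (for some $m \le 3$) together with a compact subset $K \subset X$ whose polynomial hull $\widehat K$ strictly contains $K$ and contains no analytic disc. Given any compact $C^\infty$ surface $M$, one chooses a smooth embedding $\phi : X \to M$ (for instance, taking $X$ to be a small disk and $\phi$ a coordinate chart of $M$) and applies Theorem \ref{main}. Since $\lfloor \frac{3d}{2} \rfloor = 3$ when $d = 2$, the output is a totally real $C^\infty$ embedding $\psi : M \to \mathbb{C}^{3}$ satisfying $\psi \circ \phi = \mathrm{id}$ on $K$ and $\widehat{\psi(M)} = \psi(M) \cup \widehat K$.

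The main obstacle is the construction of the seed $(X,K)$: this is the substantive input from the theory of polynomial convexity, namely a compact subset of a totally real surface whose hull is genuinely non-trivial yet devoid of analytic structure. Such seeds are available from the classical examples of polynomial hulls without analytic discs in the tradition of Wermer, Stolzenberg, Basener and Vitushkin, and from more recent constructions on totally real submanifolds. Natural candidates are a small totally real disk in $\mathbb{C}^2$ carrying a Cantor-like compact whose hull is nontrivial, or a totally real surface in $\mathbb{C}^3$ carrying a similar pathological compact.

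Granted such a seed, the conclusion follows quickly. Setting $\Sigma := \psi(M)$, from $\widehat{\Sigma} = \Sigma \cup \widehat K$ one obtains
\[
\widehat{\Sigma}\setminus \Sigma \;=\; \widehat{K}\setminus \Sigma \;\subset\; \widehat{K}\setminus K.
\]
The right-hand side contains no analytic disc by choice of seed, and $\Sigma$ itself contains none since it is totally real, so no analytic disc can live in $\widehat{\Sigma}\setminus \Sigma$. Nonemptiness reduces to $\widehat K \not\subset \Sigma$: since $\widehat K$ lies in the slice $\mathbb{C}^m \times \{0\}^{3-m} \subset \mathbb{C}^3$, if one chooses the seed so that $\widehat K \setminus K$ has Hausdorff dimension strictly greater than $2$ this is automatic because $\Sigma$ is a $2$-manifold; alternatively, one arranges the construction underlying Theorem \ref{main} so that $\Sigma$ meets this slice only along $K$. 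Either way, $\Sigma \subset \mathbb{C}^3$ is the desired totally real surface.
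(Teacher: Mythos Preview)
Your overall strategy matches the paper's: the paper derives this (as the $d=2$ case of its Corollary \ref{cor}) exactly by choosing a totally real seed $X\subset\mathbb C^2$ containing a compact $K$ with $\widehat K\setminus K$ nonempty and disc-free, embedding $X$ into $M$, and invoking Theorem \ref{main}. The specific seed the paper uses is Alexander's: a compact $K$ inside a small annular neighbourhood of the diagonal in the totally real torus $\mathbb T^2\subset\mathbb C^2$, whose hull is nontrivial and contains no analytic disc. Your allusion to ``Wermer, Stolzenberg, Basener and Vitushkin'' and to Cantor-like sets on a totally real disk is in the right spirit but leaves unverified that such a seed actually sits inside a $2$-dimensional totally real surface; Alexander's theorem is precisely the input that pins this down.

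The genuine gap in your argument is the non-emptiness step, i.e.\ showing $\widehat K\not\subset\Sigma$. Neither of your two proposed fixes is justified. For (a), you would need a seed whose hull has Hausdorff dimension $>2$ \emph{and} sits in a totally real surface in $\mathbb C^{\le 3}$; you do not produce one, and the standard examples (including Alexander's) are not known to have this property. For (b), ``arranging $\Sigma$ to meet the slice $\mathbb C^m\times\{0\}$ only along $K$'' is not part of the conclusion of Theorem \ref{main}, and opening up that proof to force it is extra work you have not done. The paper handles this cleanly with a different idea: if $\widehat K\subset\Sigma$, then $\widehat K$ is a polynomially convex compact inside a totally real $C^\infty$ submanifold, so by the approximation theorem (Stout, Theorem 6.3.2) one has $\mathcal P(\widehat K)=C(\widehat K)$; separating a point of $\widehat K\setminus K$ from $K$ by a continuous function then yields a polynomial violating $p\in\widehat K$, forcing $\widehat K=K$ and contradicting Alexander. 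You should replace your dimension/slice heuristics with this argument.
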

In the case where $M$ is a torus, Theorem \ref{IS} was proved by Izzo, Samuelsson Kalm  and the second named author \cite{ISW}. 
In a recent paper Gupta \cite{gupta} gave an explicit example of an isotropic torus in $\C^3$  whose polynomial hull consists only of an annulus attached to the torus.
We note that the torus embedded in \cite{ISW} is also isotropic, the image being the graph of a real valued function on the torus in $\C^2$.

 As a corollary to Theorem \ref{main} we prove the following generalization of Theorem \ref{IS} to every dimension $d>1$.

\begin{corollary}\label{cor}
Let $M$ be a compact $C^\infty$ manifold (possibly with boundary)  of dimension $d>1$. Then $M$
embeds into $\mathbb C^{\lfloor \frac{3d}{2}\rfloor}$ as a totally real $C^\infty$ submanifold $\Sigma$ such that 
$\widehat\Sigma\setminus \Sigma$ is not empty and does not contain any analytic disc.
\end{corollary}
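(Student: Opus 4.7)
The case $d=2$ is immediate from Theorem \ref{IS}, so assume $d\geq 3$. The plan is to produce a $d$-dimensional totally real submanifold $X\subset\mathbb C^{d+1}$ carrying a compact set $K$ whose polynomial hull is already non-trivial and disc-free, and then to apply Theorem \ref{main} to extend the corresponding embedding to all of $M$.

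Fix an Izzo--Stout embedding $\Sigma_0\subset\mathbb C^3$ of $S^2$ given by Theorem \ref{IS}, so that $\widehat{\Sigma_0}\setminus\Sigma_0$ is non-empty and contains no analytic disc. Let $B\subset\mathbb R^{d-2}$ be an open ball around $0$ and set
$$ X:=\Sigma_0\times B\subset\mathbb C^3\times\mathbb R^{d-2}\subset\mathbb C^{d+1},\qquad K:=\Sigma_0\times\{0\}. $$
Since $\Sigma_0\subset\mathbb C^3$ and $\mathbb R^{d-2}\subset\mathbb C^{d-2}$ are totally real, the product $X$ is a totally real $C^\infty$ submanifold of $\mathbb C^{d+1}$ of dimension $d$. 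The normal bundle of $S^2$ in $\mathbb R^d$ is trivial for every $d\geq 3$, so $X\cong S^2\times B$ embeds into $\mathbb R^d$, and composing with a chart of $M$ furnishes a $C^\infty$ embedding $\phi:X\hookrightarrow M$.

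Invoking Theorem \ref{main} with $m=d+1$ (note that $d+1\leq\lfloor 3d/2\rfloor$ for $d\geq 2$, so $\ell=\lfloor 3d/2\rfloor-d-1\geq 0$) produces a totally real $C^\infty$ embedding $\psi:M\to\mathbb C^{\lfloor 3d/2\rfloor}$ satisfying $\psi\circ\phi=\mathrm{id}$ on $K$ and $\widehat{\psi(M)}=\psi(M)\cup\widehat K$. Set $\Sigma:=\psi(M)$. Since the coordinate functions $z_4,\dots,z_{\lfloor 3d/2\rfloor}$ vanish identically on $K$, we have $\widehat K=\widehat{\Sigma_0}^{\mathbb C^3}\times\{0\}$, and the last $d-2$ complex coordinates also vanish on any analytic disc contained in $\widehat K$; such a non-constant disc would therefore project to a non-constant analytic disc in $\widehat{\Sigma_0}$, contradicting Theorem \ref{IS}. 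Hence $\widehat\Sigma\setminus\Sigma\subseteq\widehat K$ contains no analytic disc.

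The sole genuinely technical point is to guarantee that $\widehat\Sigma\setminus\Sigma=\widehat K\setminus\psi(M)$ is non-empty, i.e.\ that $\widehat K\not\subseteq\psi(M)$. This is handled by transversality: $\psi(M)$ has real dimension $d$ while $\mathbb C^3\times\{0\}$ has real codimension $2\lfloor 3d/2\rfloor-6$ in $\mathbb C^{\lfloor 3d/2\rfloor}$, so for a generic choice of $\psi$ (freedom that the construction underlying Theorem \ref{main} leaves available away from a neighbourhood of $\phi(K)$) the intersection $\psi(M)\cap(\mathbb C^3\times\{0\})$ has expected real dimension $d+6-2\lfloor 3d/2\rfloor$. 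For $d\geq 4$ this expected dimension is negative, so the intersection can be arranged to equal $K$. For $d=3$ it is generically a $1$-dimensional curve, but $\widehat{\Sigma_0}\setminus\Sigma_0$ has empty interior in $\mathbb C^3$ (otherwise it would contain a piece of a complex line, hence an analytic disc), so a further small perturbation pushes this curve off $\widehat{\Sigma_0}\setminus\Sigma_0$. In either case $\widehat{\Sigma_0}\setminus\Sigma_0\subseteq\widehat K\setminus\psi(M)$ is non-empty, and Corollary \ref{cor} follows.
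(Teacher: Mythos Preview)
Your overall strategy---thicken a two-dimensional object whose hull is already non-trivial and disc-free, embed it as $X$ into $M$, and invoke Theorem \ref{main}---is sound and is a legitimate variant of the paper's approach.  The paper uses Alexander's compact set $K\subset\bT^2\subset\mathbb C^2$ (Theorem \ref{alexander}) and takes $X\subset\mathbb C^d$, whereas you use an Izzo--Stout sphere $\Sigma_0\subset\mathbb C^3$ and take $X\subset\mathbb C^{d+1}$; both choices feed into Theorem \ref{main} correctly, and the verification that $\widehat\Sigma\setminus\Sigma$ contains no analytic disc goes through in either case.

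The genuine gap is the final paragraph, where you argue that $\widehat K\not\subset\psi(M)$ by transversality.  First, Theorem \ref{main} as stated produces \emph{one} embedding $\psi$; it does not assert that $\psi$ can be taken generic subject to (i) and (ii), so appealing to ``freedom that the construction underlying Theorem \ref{main} leaves available'' requires reopening the proof (and even then the final perturbation coming from Theorem \ref{nohull} could in principle push points back into $\mathbb C^3\times\{0\}$).  Second, and more seriously, the $d=3$ argument fails: you are forcing the $2$-dimensional set $K=\Sigma_0\times\{0\}$ into the intersection $\psi(M)\cap(\mathbb C^3\times\{0\})$, so that intersection is never a $1$-curve near $K$; and even away from $K$, knowing only that $\widehat{\Sigma_0}\setminus\Sigma_0$ has empty interior in $\mathbb C^3$ is nowhere near enough to slide a curve off it (a set with empty interior can still have real dimension $5$, and then a generic curve hits it).

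The paper sidesteps this entirely with a one-line functional-analytic argument that works uniformly for all $d$: if $\widehat K$ were contained in the totally real manifold $\psi(M)$, then since $\widehat K$ is polynomially convex one would have $\mathcal P(\widehat K)=C(\widehat K)$ by \cite[Theorem 6.3.2]{stout}, and then any continuous function vanishing on $K$ but not on $\widehat K\setminus K$ is a uniform limit of polynomials, contradicting the definition of the hull.  Substituting this for your last paragraph repairs the proof completely.
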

 
The same result was proved in \cite{ISW} for dimension $2d+4$ of the target space.  Notice that by \cite[Theorem 2.1]{NY} the dimension $\lfloor \frac{3d}{2}\rfloor$ is  optimal  for totally real embeddings of $d$-dimensional manifolds, which implies that Corollary \ref{cor} is optimal.

We emphasize  that Theorem \ref{main} is a simple consequence of the general fact that any 
$C^\infty$ manifold of dimension $d$ embeds into $\mathbb C^{\lfloor \frac{3d}{2}\rfloor}$ as a totally 
real submanifold, and the following general genericity result regarding polynomial hulls and 
totally real embeddings (to be proved in Section \ref{proofs}):

\begin{theorem}\label{nohull}
Let $M$ be a compact $C^\infty$ manifold (possibly with  boundary) of dimension $d<n$ and let $f\colon M\to \C^n$ be a  totally real $C^\infty$ embedding.
Let $K\subset \C^n$ be a compact polynomially convex set. Then for any $k\geq 1$ and for any $\varepsilon>0$ there exists a  totally real  $C^\infty$ embedding  $f_\varepsilon\colon M\to \C^n$ such that
\begin{enumerate}
\item $\|f_\varepsilon-f\|_{C^k(M)}<\varepsilon$,
\item $f_\varepsilon-f=0$  on $f^{-1}(K)$, 
\item $\widehat{K\cup f_\varepsilon(M)}= K\cup f_\varepsilon(M)$.
\end{enumerate}
\end{theorem}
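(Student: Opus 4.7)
The plan is to reduce polynomial convexity of $K\cup f_\varepsilon(M)$ to the existence, after a suitable perturbation of $f$, of a plurisubharmonic exhaustion of $\C^n$ vanishing exactly on $K\cup f_\varepsilon(M)$ and with Runge sublevel sets. The hypothesis $d<n$ is the source of the flexibility needed for the perturbation.

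The starting ingredients are standard. Since $K$ is polynomially convex, there exists a smooth plurisubharmonic function $\sigma\geq 0$ on $\C^n$ with $\{\sigma=0\}=K$, strictly plurisubharmonic on $\C^n\setminus K$ and with polynomial polyhedra as small sublevel sets. Since $f(M)$ is totally real, the classical construction of plurisubharmonic defining functions for totally real submanifolds supplies a smooth $\tau\geq 0$ vanishing to second order on $f(M)$ and strictly plurisubharmonic on a tubular neighborhood of $f(M)$. Gluing $\sigma$ and $\tau$ via a partition of unity and adding a large multiple of $\sigma$ gives a nonnegative smooth function $\rho$ on $\C^n$ with $\{\rho=0\}=K\cup f(M)$, strictly plurisubharmonic on some neighborhood of $K\cup f(M)$ off the set itself.

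The main step is to perturb $f$ to $f_\varepsilon$, with $f_\varepsilon=f$ on $L:=f^{-1}(K)$, keeping $f_\varepsilon$ totally real and $C^k$-close to $f$, so that the analogous function $\rho_\varepsilon$ has \emph{Runge} sublevel sets in all of $\C^n$. The dimension hypothesis $d<n$ enters critically here: the real codimension of $M$ in $\C^n$ exceeds $n$, which provides enough room for a genericity/transversality argument placing $f_\varepsilon(M)$ in general position with respect to the polyhedral neighborhoods $\{\sigma<c\}$ of $K$, ensuring that no new hull bubbles form when $f_\varepsilon(M)$ is attached to these polyhedra. In practice I expect this must be carried out as an iterative Forstner\v{c}-style approximation argument in the spirit of Izzo--Stout \cite{IzzoStout} and Izzo--Samuelsson Kalm--Wold \cite{ISW}, rather than by a one-shot transversality perturbation, since one must simultaneously preserve total realness, fix $f$ on $L$, and kill all potential analytic hull components.

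Once the perturbation is in hand, the proof concludes at once: $K\cup f_\varepsilon(M)=\bigcap_{c>0}\overline{\{\rho_\varepsilon<c\}}$ is a decreasing intersection of polynomially convex compacta, hence polynomially convex. The main obstacle is evidently the perturbation step, since ensuring the Runge property of the sublevel sets is essentially equivalent to the desired conclusion; this is where the dimension hypothesis $d<n$ and all the genuine analytic work of the argument must be concentrated.
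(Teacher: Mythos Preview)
Your outline correctly identifies the shape of the argument---one must perturb $f$, fixing it on $f^{-1}(K)$, so that no hull forms---but the proposal does not supply the mechanism that accomplishes this, and the plurisubharmonic packaging you describe does not bring one closer to it. As you yourself note, arranging that $\rho_\varepsilon$ have Runge sublevel sets is tantamount to already knowing that $K\cup f_\varepsilon(M)$ is polynomially convex; the construction of $\rho$ from $\sigma$ and $\tau$ is standard and gives a local defining function, but the global Runge property of the sublevel sets is exactly the conclusion to be proved. The vague appeal to ``general position with respect to the polyhedral neighborhoods of $K$'' is not a recognizable transversality statement: hulls are not governed by finite-dimensional incidence conditions, and no amount of generic perturbation of $f(M)$ relative to $\{\sigma<c\}$ will by itself prevent $\widehat{K\cup f_\varepsilon(M)}$ from acquiring new pieces.

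The paper's proof is organized quite differently. One first proves a weaker statement (Lemma~\ref{littlehull}): for any neighborhood $U$ of $K$ there is a perturbation $f_\varepsilon$ with $\widehat{K\cup f_\varepsilon(M)}\subset U\cup f_\varepsilon(M)$. Theorem~\ref{nohull} then follows by iterating this over a shrinking sequence $U_j\searrow K$, with the $C^k$-increments chosen summable and small enough that the stability result of Lemma~\ref{corollary2} guarantees the hull bounds survive to the limit. The substance is entirely in Lemma~\ref{littlehull}, and the method there is not a plurisubharmonic-function argument: one covers $M$ by small cubes, runs an induction on the dimension of the cubes, and uses Jensen measures together with the fact that each small piece $f_\varepsilon(Q^J_\alpha(\delta))$ not meeting $K$ can be pushed (via approximation by automorphisms of $\C^n$) into the zero set of an entire function $g_\alpha$. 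A point in the hull must then lie in at most $n$ of the hypersurfaces $Z(g_\alpha)$, so its representing Jensen measure is carried by a controlled union of $n$ cubes plus a thin grid, whose hull one has already forced into $U$. The hypothesis $d<n$ is used precisely here: a totally real $d$-cube can sit inside a complex hypersurface only when $d\le n-1$. None of this apparatus is visible in your sketch, and it is what actually does the work.
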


In the case $K=\varnothing$, Theorem \ref{nohull} was proved by Forstneric and Rosay for $d=2$ and $n\geq 3$,
by Forstneric  \cite{Forstneric} for $d\leq \frac{2n}{3}$, and  by L\o w and the second named author \cite{LW} for $d<n$. \\

To obtain Corollary \ref{cor}  it is crucial that Alexander \cite{Alexander} has constructed
a suitable compact set $K$ in the totally real distinguished boundary of the bidisk in $\mathbb C^2$ (see Section \ref{sectionth}).

\section{Proof of Theorem \ref{main} and Corollary \ref{cor}}\label{sectionth}
In this section we prove Theorem \ref{main} and Corollary \ref{cor} assuming Theorem \ref{nohull} which will be proved  in the next section.

We recall first the following result \cite[Lemma 5.3]{ForstnericRosay}. By {\sl generic} we mean open and dense in the Whitney $C^\infty$-topology.
\begin{lemma}\label{fr}
Let $M$ be a compact $C^\infty$ manifold (possibly with boundary) of dimension $d$.  Then a generic $C^\infty$  embedding $\psi:M\rightarrow \C^{\lfloor\frac{3d}{2}\rfloor}$
is totally real.
\end{lemma}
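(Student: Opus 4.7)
The plan is to invoke Thom's jet transversality theorem. Total reality of $\psi$ at a point $p\in M$ depends only on $d\psi_p$, so it is a pointwise condition on the 1-jet of $\psi$: the $d$-plane $d\psi_p(T_pM)$ must be a totally real subspace of $\C^n$ with $n=\lfloor 3d/2\rfloor$, i.e.\ $V\cap iV=\{0\}$ for $V=d\psi_p(T_pM)$.

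First I would analyse the non-totally-real locus inside the real Grassmannian $G_d(\R^{2n})=G_d(\C^n)$. Stratify by $k(V):=\dim_{\C}(V\cap iV)$ and set $\Sigma_k:=\{V:k(V)\geq k\}$; total reality is the condition $V\notin\Sigma_1$. The codimension of $\Sigma_k$ can be computed from the incidence variety $\cI_k=\{(V,W):W\subset V,\ W\in G_k^{\C}(\C^n)\}$, fibered over the complex Grassmannian $G_k^{\C}(\C^n)$ with fiber $G_{d-2k}(\C^n/W)$. A direct dimension count gives
\[
\mathrm{codim}_{G_d(\R^{2n})}\Sigma_k=2k(n-d+k),
\]
so in particular $\mathrm{codim}\,\Sigma_1=2(n-d+1)$, and higher strata have strictly larger codimension.

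Next I would apply jet transversality. Pull $\Sigma_1$ back to a semi-algebraic subbundle $\widetilde{\Sigma}_1\subset J^1(M,\C^n)$ over the open set of immersive 1-jets. A generic $C^\infty$ embedding $\psi$ has $j^1\psi$ transverse to $\widetilde{\Sigma}_1$, and the transversal preimage is empty whenever $\dim M<\mathrm{codim}\,\widetilde{\Sigma}_1$, i.e.\ $d<2(n-d+1)$, equivalently $3d<2n+2$. For $n=\lfloor 3d/2\rfloor$ this reads $3d<3d+2$ when $d$ is even and $3d<3d+1$ when $d$ is odd, so the inequality holds in both parities. Hence generically $j^1\psi$ avoids $\widetilde{\Sigma}_1$ (and a fortiori every $\widetilde{\Sigma}_k$), meaning $\psi$ is totally real.

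Finally, the ``open'' part of the genericity statement follows from compactness of $M$ together with the openness of the totally real condition on immersive 1-jets; the ``dense'' part is precisely the transversality argument above. Manifolds with boundary are handled by extending $M$ to a slightly larger open manifold of the same dimension, applying the argument there, and restricting. The main obstacle is the codimension bookkeeping for $\Sigma_1$, together with verifying that the resolution $\cI_1\to\Sigma_1$ is generically injective (a generic $V\in\Sigma_1$ has $k(V)=1$ and therefore contains a unique complex line, namely $V\cap iV$), so that $\dim\cI_1=\dim\Sigma_1$; this is precisely what makes $n=\lfloor 3d/2\rfloor$ the borderline dimension for the conclusion.
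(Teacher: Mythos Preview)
Your argument is correct: the codimension computation $\mathrm{codim}\,\Sigma_1=2(n-d+1)$ is right, and the inequality $d<2(n-d+1)$ does hold for $n=\lfloor 3d/2\rfloor$ in both parities, so jet transversality gives the density, while compactness gives openness.

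However, the paper does not actually prove this lemma at all; it simply quotes it as \cite[Lemma~5.3]{ForstnericRosay}. So there is nothing to compare against beyond noting that your transversality argument is exactly the standard one (and essentially the one in Forstneri\v{c}--Rosay): you have supplied the details the paper chose to outsource.
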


\emph{Proof of Theorem \ref{main}:}

 Let $U'$ be a relatively compact  open subset of $X$ containing $K$. Set $U:=\phi(U')$, and let $\psi_1\colon \phi(X)\rightarrow  \C^m$ be the inverse of $\phi$.
Let $\psi_2\colon M\to \C^m$ be a $C^\infty$  mapping which agrees with $\psi_1$ on $U$. 
Notice that since $2(m+\ell)\geq 2d+1$, by Whitney's theorem (see e.g. \cite[Theorem 2.13]{hirsch}) a generic $C^\infty$  mapping from $M$ to $\C^{m+\ell}$ is an embedding, and since $m+\ell\geq \lfloor \frac{3d}{2}\rfloor$,  by Lemma \ref{fr} a generic $C^\infty$  embedding of $M$ into $\mathbb C^{m+\ell}$ is totally real.  Let  $\psi_3\colon M\to \C^{m+\ell}$ be a small perturbation of $\psi_2$ which is a totally real $C^\infty$  embedding.

 Let $\chi\in C^\infty_0(U)$ with $\chi\equiv 1$  in a neighborhood of $\phi(K)$ and $0\leq \chi\leq 1$.  Then if the perturbation $\psi_3$ was small enough, the mapping  from $M$ to $\C^{m+\ell}$ defined as  $\psi_4:=\chi\cdot\psi_1 + (1-\chi)\psi_3$
is a totally real $C^\infty$  immersion    injective in $U$.    There exists a small $C^\infty$  perturbation $\psi\colon M\to \mathbb C^{m+\ell}$
which is a totally real $C^\infty$  embedding and such that $\psi|_{\phi(K)}=\psi_4|_{\phi(K)}$ (see e.g. \cite[Theorem 2.4.3]{muk} where the approximation  is in $C^0$-norm but the proof can be easily adapted to our case) which implies that  $K\subset\psi(M)$. Notice that $\psi_4|_{\phi(K)}=\psi_1|_{\phi(K)}$, and thus $\psi\circ\phi=\mathrm{id}$ on $K$.
Since $m+\ell>d$, we can apply Theorem \ref{nohull} with $K$ replaced by $\widehat K$ and Theorem \ref{main} follows.   $\hfill\square$

 \vspace{\baselineskip}

To prove Corollary \ref{cor} we need the following result of Alexander:

\begin{theorem}\label{alexander}(Alexander, \cite{Alexander})
The standard torus $\bT^2:= \{(e^{i\vartheta},e^{i\psi})\colon \vartheta,\psi\in \R\}$ in $\C^2$ contains a compact subset $K$ such that $\widehat{K}\setminus K$ is not empty but contains no analytic discs. Such a set can be found in every neighborhood of the diagonal in $\bT^2$.
\end{theorem}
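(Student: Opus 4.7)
The plan is to construct $K$ by an iterative Cantor-like procedure, concentrated in a prescribed tubular neighborhood of the diagonal circle $\Delta = \{(e^{i\vartheta},e^{i\vartheta}) : \vartheta \in \R\} \subset \bT^2$. The starting observation is that $\widehat{\Delta}$ contains the diagonal analytic disc $\{(\zeta,\zeta) : |\zeta| \leq 1\}$, so a not-too-thin subset of a neighborhood of $\Delta$ should inherit a non-trivial piece of this hull; the real task is to thin the construction to a totally disconnected set whose hull still protrudes from it, yet admits no analytic structure.

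The construction is inductive. At stage $n$, one maintains a finite disjoint union of small closed arcs $A_{n,1},\dots,A_{n,k_n}\subset\bT^2$, each contained in the $1/n$-neighborhood of $\Delta$. Passing to stage $n+1$, each arc $A_{n,j}$ is replaced by finitely many shorter arcs obtained by removing open middle segments (à la Cantor), while simultaneously being perturbed slightly off the diagonal by a parameter $\varepsilon_n \to 0$. Setting $K=\bigcap_n\bigcup_j A_{n,j}$ produces a compact, totally disconnected subset of any preassigned neighborhood of $\Delta$, so the last assertion of the theorem is automatic.

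To establish that $\widehat K\setminus K\neq\varnothing$, the plan is to exhibit explicit polynomial bounds that propagate through the inductive stages. At each stage $n$, the finite union $\bigcup_j A_{n,j}$ is a small deformation of a portion of $\Delta$, and hence its polynomial hull contains a small piece of the diagonal disc. By choosing the removal parameters and perturbations $\varepsilon_n$ sufficiently slowly, a single point $p$ close to $(0,0)$ can be kept in $\widehat{\bigcup_j A_{n,j}}$ uniformly in $n$, with uniform polynomial estimates $|q(p)|\leq C\|q\|_{A_{n,j}}$. Taking the limit, $p\in\widehat K\setminus K$. Dually, one can produce an annihilating measure on $K$ as a weak-$*$ limit of normalized arc-length measures.

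The main obstacle is ruling out analytic discs in $\widehat K\setminus K$, which is the heart of the argument. Suppose $\phi:\bD\to\widehat K\setminus K$ were a non-constant analytic disc. One exploits the Jensen measure / Gleason part structure of the uniform algebra $P(\widehat K)$: such a disc yields a family of representing measures on $K$ for points of $\phi(\bD)$ that are absolutely continuous with respect to each other. A boundary-value argument in the spirit of the F.~and M.~Riesz theorem (or a Privalov-type uniqueness result) then forces the a.e.\ radial boundary values of $\phi$ to lie in $K$; but the totally disconnected, Cantor-type structure of $K$ is incompatible with this, since the preimage in $\bT$ of any open arc of $\phi$'s image would have to split into incompatible clopen pieces. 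The delicate point is quantitative: the thinning parameters and the perturbation amounts $\varepsilon_n$ must be chosen carefully enough that (i) the Cantor structure is sharp enough to kill every disc, while (ii) enough of the diagonal hull survives to give the non-trivial point $p$.
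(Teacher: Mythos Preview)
The paper does not prove this statement; it is quoted from Alexander \cite{Alexander} and used as a black box in the proof of Corollary~\ref{cor}. There is thus no in-paper argument to compare against, and your proposal has to be judged on its own.

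The decisive gap is the no-discs step. You claim that once the a.e.\ radial boundary values of a non-constant analytic map $\phi:\bD\to\widehat K$ are shown to lie in $K$, the total disconnectedness of $K$ yields a contradiction. That implication is not valid: there is no general theorem preventing a non-constant bounded holomorphic map on $\bD$ from having a.e.\ boundary values in a totally disconnected compact set, and total disconnectedness of $K$ has never, by itself, been a mechanism that excludes analytic structure from $\widehat K$; if it were, results of the Stolzenberg/Alexander type would be easy rather than delicate. Your sentence about ``the preimage in $\bT$ of any open arc of $\phi$'s image'' conflates the interior image $\phi(\bD)\subset\widehat K\setminus K$, which is connected, with the boundary trace in $K$; nothing forbids a connected analytic disc from having its Fatou boundary values scattered through a Cantor set. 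Alexander's actual argument is of a different nature: the set $K=\bigcap_n K_n$ is built so that the hulls $\widehat{K_n}$ are explicitly computable and their intersection $\widehat K$ can be analyzed directly, and the absence of discs is read off from that concrete description of the hull rather than from a qualitative feature of $K$. Without a comparable grip on $\widehat K$, your outline does not reach the conclusion. The non-triviality step---keeping a fixed point in every $\widehat{K_n}$ by thinning slowly---is only heuristic as written, but it is the more repairable of the two issues.
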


\emph{Proof of Corollary \ref{cor}:}
 Choose a small annular neighbourhood $A$ of the diagonal in $\mathbb T^2$, and 
let $K\subset A$ be the compact subset given by Theorem \ref{alexander}.
Define a totally real $C^\infty$ submanifold $X$ of $\mathbb C^d$
by
$$X=\{(z_1,z_2,z_3,\ldots ,z_d):(z_1,z_2)\in A, \mathrm{Im}(z_j)=0 \mbox{ and} -\varepsilon<\mathrm{Re}(z_j)<\varepsilon \mbox{ for } j=3,\ldots ,d\},$$
 where $\varepsilon>0$.
Now $X$ embeds into the into (any small portion) of the interior of $M$.  By Theorem \ref{main}, there exists a totally real $C^\infty$ embedding $\psi\colon M\to \C^{\lfloor \frac{3d}{2}\rfloor}$ such that 
$K\subset \psi(M)$ and  $\widehat{\psi(M)}=\psi(M)\cup\widehat K$. 
 Recall \cite[Theorem 6.3.2]{stout} that  for a polynomially convex subset $H$ of a totally real $C^\infty$ submanifold the uniform algebra $\mathcal{P}(H)$, consisting of all the functions on $H$ that can be approximated uniformly by polynomials, coincides with the algebra of continuous functions on $H$. Hence  the hull $\widehat K$ cannot be contained in $\psi(M),$ and  the result follows from Theorem \ref{alexander}. $\hfill\square$

\vspace{\baselineskip}

\section{Proof of the main lemma}

For any compact set $K\subset \C^n$ set $h(K):=\overline{\widehat K\setminus K}.$
 We will sometimes denote $\widehat K$ by $[K\widehat]$.

The proof of the following main lemma  is essentially contained in  \cite[Proposition 6,7]{LW} where it is given in the case of $C^1$-regularity. We give a proof for completeness.
\begin{lemma}\label{littlehull}
Let $M$ be a compact $C^\infty$ manifold (possibly with  boundary) of dimension $d<n$ and let $f\colon M\to \C^n$ be a  totally real $C^\infty$ embedding.
Let $K\subset \C^n$ be a compact polynomially convex set, and let $U$ be a neighborhood of $K$. Then for any $k\geq 1$ and for any $\varepsilon>0$ there exists a  totally real $C^\infty$  embedding $f_\varepsilon\colon M\to \C^n$ such that
\begin{enumerate}
\item $\|f_\varepsilon-f\|_{C^k(M)}<\varepsilon$,
\item $f_\varepsilon-f=0$  in a neighborhood of $f^{-1}(K)$, 
\item $\widehat{K\cup f_\varepsilon(M)}\subset U\cup f_\varepsilon(M)$.
\end{enumerate}
\end{lemma}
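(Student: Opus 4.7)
The approach is a finite induction of local bumping perturbations, following \cite[Propositions~6--7]{LW} (stated there in $C^1$, but the arguments adapt to $C^\infty$ without modification since only cutoffs and normal sections need smoothness). The hypothesis $d<n$ is essential: each local step requires a real direction in $\C^n$ transverse to the complex-linear span of $T_xf(M)$, and such a direction exists precisely because this span has real dimension $2d<2n$. Since $K$ is polynomially convex, first pick a polynomial polyhedron $V$ with $K\subset V\Subset U$; it then suffices to produce $f_\varepsilon$ with $\widehat{K\cup f_\varepsilon(M)}\subset V\cup f_\varepsilon(M)$.

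Cover the compact set $\widehat{K\cup f(M)}\setminus V$ by finitely many open balls $W_1,\dots,W_N$, each with closure disjoint from $\overline V$, each small enough that $f^{-1}(W_j)$ lies in a single chart of $M$ on which $f$ is $C^k$-close to an affine totally real embedding. Construct totally real $C^\infty$ embeddings $f_0=f,\dots,f_N=f_\varepsilon$ inductively, each satisfying $\|f_i-f_{i-1}\|_{C^k(M)}<\varepsilon/N$, with $f_i=f_{i-1}$ outside a small enlargement of $f_{i-1}^{-1}(W_i)$, and
\[
\widehat{K\cup f_i(M)}\;\subset\;V\cup f_i(M)\cup\bigcup_{j>i}\overline{W_j}.
\]
For $i=0$ this is the choice of cover; for $i=N$ it is the required conclusion. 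The inductive step $f_{i-1}\to f_i$ removes $\overline{W_i}$ from the residual hull by a local bumping of the totally real slice $f_{i-1}(M)\cap W_i$: inside the chart containing $f_{i-1}^{-1}(W_i)$, a smooth real vector field transverse to the complex-linear span of the tangent space of $f_{i-1}(M)$ exists (using $d<n$), and multiplying it by a cutoff supported in $f_{i-1}^{-1}(W_i)$ yields the candidate perturbation. Combining H\"{o}rmander--Wermer polynomial peaking on the small totally real piece (which is polynomially convex for $W_i$ small) with Oka--Weil approximation on the already polynomially convex surrounding data produces a strictly plurisubharmonic weight on a Stein neighbourhood of the whole configuration that excises the hull from $\overline{W_i}$.

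\emph{Main obstacle.} The hard part is precisely this local step: the bumping must shrink the hull over $\overline{W_i}$ without re-creating pockets over $\overline V$ or over the remaining $\overline{W_j}$, $j>i$. This demands that the locally constructed plurisubharmonic weight fit together, via Oka--Weil, with the weight witnessing polynomial convexity of the surrounding configuration from the inductive hypothesis, and that the perturbation stays small enough in $C^1$ to preserve both the embedding and totally real properties of $f_i$. The patching is the content of \cite[Proposition~7]{LW}; it relies on the small size of $W_i$ relative to the local geometry of $f_{i-1}(M)$, and since only $C^1$ closeness together with $C^\infty$ cutoffs is required, the transfer from $C^1$ to $C^\infty$ regularity is immediate.
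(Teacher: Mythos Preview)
Your proposal has a genuine gap in the local step, and the sketch does not actually follow \cite[Propositions 6--7]{LW} despite the citation.

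\textbf{The covering is of the wrong set.} You cover $\widehat{K\cup f(M)}\setminus V$ by small balls $W_j$ and then perturb $f$ on $f^{-1}(W_j)$. But hull points need not lie anywhere near $f(M)$: think of the torus in $\C^2$, whose hull is the full bidisc. Balls $W_j$ covering such interior hull points satisfy $f^{-1}(W_j)=\varnothing$, so your bumping step is vacuous there and nothing forces the hull to leave $\overline{W_j}$. Covering the manifold rather than the hull is the right move; this is what the paper's Step~2 and L\o w--Wold actually do.

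\textbf{The local mechanism is not the L\o w--Wold mechanism.} Even where $W_j$ does meet $f(M)$, the assertion that a normal bump plus ``H\"ormander--Wermer peaking plus Oka--Weil'' yields a plurisubharmonic weight killing the hull over $\overline{W_j}$ is not substantiated and is not how \cite{LW} works. A small totally real disc being polynomially convex in isolation says nothing about the hull of its union with $K$ and the rest of $f(M)$. The actual device, reproduced in the paper, is quite different: one subdivides a local cube $I^d$ into a fine grid, uses induction on the \emph{dimension} $d$ to control the hull along the $(d-1)$-skeleton, and then pushes each small $d$-cell into the zero set $Z(g_\alpha)$ of an entire function by approximating $f$ with an automorphism of $\C^n$ (here $d<n$ is used to get codimension). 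A Jensen-measure argument then shows that any hull point lies in the intersection of at most $n$ of the $Z(g_\alpha)$, which is arranged to be empty for $n+1$ of them and controlled otherwise. Your one-paragraph sketch contains none of this machinery; the ``main obstacle'' you flag is real, but the resolution you invoke from \cite{LW} is not the one described.
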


The following result is proved in \cite[Proposition 4]{LW}.

\begin{prop}\label{proposition4}
Let $M$ be a compact $C^1$ manifold (possibly with  boundary) and let $f\colon M\to \C^n$ be a  totally real $C^1$ embedding.
Let $U'\subset \subset U\subset \subset \C^n$ be open sets. Then there exists an open neighborhood $\Omega$ of $f(M)$ such that
\begin{enumerate}
\item   if $S\subset M$ is closed and $K\subset U'$ is compact, then 
$$ h(K\cup f(S))\subset U'\cup \Omega\Rightarrow h(K\cup f(S))\subset U,$$
\item if $ \tilde f$ is a sufficiently small $C^1$-perturbation of $f$, then (1) holds with   $\tilde f$ in place of $f$.

\end{enumerate}
\end{prop}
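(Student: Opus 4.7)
The plan is to take $\Omega$ to be a thin tubular neighbourhood of $f(M)$ carrying a strictly plurisubharmonic defining function, and then to apply Rossi's local maximum modulus principle to rule out hull points outside $U$. The totally real hypothesis is exactly what produces the required plurisubharmonic shield, via the classical Hörmander--Wermer construction.

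\emph{Shield and choice of $\Omega$.} Since $f(M)$ is a compact totally real $C^1$ submanifold, there exist an open neighbourhood $W\supset f(M)$ and a non-negative $C^2$ function $\rho\colon W\to[0,\infty)$ with $\rho^{-1}(0)=f(M)$, strictly plurisubharmonic on $W$, and satisfying $\rho(z)\asymp\mathrm{dist}(z,f(M))^2$ near $f(M)$. Locally, in coordinates where $f(M)$ is a graph over $\R^d\subset\C^n$, take $\rho$ to be the squared distance from $f(M)$ in normal directions; a partition of unity globalises this. Moreover $\rho$ depends continuously on $f$ in the $C^1$ topology, so (2) will follow from (1). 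Set $\Omega:=\{z\in W:\rho(z)<\delta\}$, where $\delta>0$ is chosen so small that $\Omega\cap\overline{U'}\subset U$ and so that every $p\in\Omega\setminus\overline{U'}$ is the centre of an open ball $B_p\subset W$ of radius strictly exceeding the transverse thickness of $\Omega$ at $p$, with $\overline{B_p}\cap\overline{U'}=\varnothing$.

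\emph{Main argument.} Let $S\subset M$ be closed, $K\subset U'$ compact, $L:=K\cup f(S)$, and assume $h(L)\subset U'\cup\Omega$. Suppose for contradiction that $q\in h(L)\setminus U$; then $q\in\Omega\setminus\overline{U'}$ and we may assume $q\in\widehat L\setminus L$. Apply Rossi's local maximum modulus principle to $\rho$ on the ball $B:=B_q$: because $L\cap\overline B=f(S)\cap\overline B\subset f(M)$ and $\rho\equiv 0$ on $f(M)$,
\[
0<\rho(q)\ \leq\ \sup_{\widehat L\cap\partial B}\rho.
\]
Combined with $\widehat L\subset U'\cup\Omega$ and $\overline B\cap\overline{U'}=\varnothing$, this produces a new point $q_1\in\widehat L\cap\partial B\cap\Omega$ in $\widehat L\setminus L$ with $\rho(q_1)\geq\rho(q)>0$. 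Iterating this reasoning at $q_1$, and exploiting the strict plurisubharmonicity of $\rho$ to upgrade each weak inequality to a strict increase, forces $\rho$ to exceed $\delta$ along the produced sequence of hull points, contradicting the containment $\widehat L\cap(\Omega\setminus\overline{U'})\subset\{\rho<\delta\}$.

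\emph{Stability and main obstacle.} For (2), being totally real is a $C^1$-open condition and $\rho$ depends continuously on $f$, so for $\tilde f$ sufficiently $C^1$-close to $f$ a small perturbation $\tilde\rho$ of $\rho$ serves as defining function for $\tilde f(M)$ on a slightly shrunken $\Omega$, and the same argument applies verbatim. The \emph{main technical obstacle} is the quantitative calibration in the choice of $\Omega$: tuning $\delta$ and the ball radii so that each $\partial B_p$ genuinely exits $\Omega$ and so that the Rossi iteration produces a strict monotone increase of $\rho$ rather than stalling at a common value. This requires uniform bounds on the second fundamental form of $f(M)$ and a delicate interplay between the tube thickness $\sqrt\delta$ and $\mathrm{dist}(\overline{U'},\partial U)$. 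In the cleanest formulation, as in \cite{LW}, one avoids the iteration altogether by constructing a globally defined plurisubharmonic function that coincides with $\rho$ on $\Omega\setminus\overline{U'}$ and equals a large constant near $\overline{U'}$, so that the global maximum principle $\sup_{\widehat L}\rho=\sup_L\rho$ yields the contradiction in one step.
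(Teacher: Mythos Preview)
The paper does not itself prove this proposition; it simply records it as \cite[Proposition~4]{LW}.  Your outline does isolate the two ingredients on which that proof rests---the strictly plurisubharmonic squared-distance function $\rho$ produced by the totally real hypothesis (H\"ormander--Wermer), together with Rossi's local maximum modulus principle---so strategically you are aligned with the cited source.

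The execution, however, has a real gap.  Rossi's principle gives only $\rho(q_1)\ge\rho(q)$, and strict plurisubharmonicity of $\rho$ does \emph{not} by itself upgrade this to a strict inequality on the hull: $\widehat L$ need contain no analytic disc through $q$, so the usual ``strictly subharmonic along a disc'' mechanism is unavailable, and your iterated sequence may well stall at a fixed value of $\rho$ without ever exiting $\{\rho<\delta\}$.  A second unaddressed case is that a bad point $q\in h(L)\setminus U$ may lie on $f(M)\setminus f(S)$, where $\rho(q)=0$ and the inequality $0<\rho(q)$ you use to start the iteration fails outright.

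You acknowledge both difficulties in your final paragraph and correctly point to the remedy---replacing the iteration by a single application of the maximum principle to a plurisubharmonic function defined on a full neighbourhood of $\widehat L$, obtained by patching $\rho$ with a constant across $U'$---but you do not carry this out.  As written, the proposal is an accurate plan whose decisive step is deferred to precisely the reference \cite{LW} that the paper already invokes, so it does not supply an independent proof.
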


The following lemma is proved in \cite[Corollary 2]{LW}
\begin{lemma}\label{corollary2}
Let $K\subset \C^n$ be a  compact subset. Let $M$ be a compact $C^1$ manifold (possibly with boundary) and let $f\colon M\to \C^n$ be a totally real $C^1$ embedding. Let $S\subset M$ be a closed subset.
Let $U$ be an open set such that 
$h(K\cup f(S))\subset U$. Then there exists a constant $c>0$ such that if 
$\|f-\tilde f\|_{C^1(M)}<c$, then  $h(K\cup \tilde f(S))\subset U$.
\end{lemma}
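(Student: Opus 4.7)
The plan is to deduce the $C^1$-stability statement directly from Proposition~\ref{proposition4}, using the standard upper semicontinuity of polynomial hulls with respect to Hausdorff convergence of compact sets in order to verify, after perturbation, the hypothesis needed to invoke that proposition. The overall shape of the argument is: choose auxiliary open sets that exhibit $\widehat{K\cup f(S)}$ inside the set $U'\cup\Omega$ produced by Proposition~\ref{proposition4}; transfer that containment to the perturbed set $K\cup\tilde f(S)$ by semicontinuity; then apply Proposition~\ref{proposition4}(2) to return to the target open set $U$.

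Concretely, first I would pick an open $U'$ with $K\cup h(K\cup f(S))\subset U'\subset\subset U$ (this is where one wants $K\subset U$, which does hold in the application to Lemma~\ref{littlehull} where $U$ is a neighborhood of $K$; in the general case one first enlarges $U$ by a small neighborhood of $K$ and shrinks back at the end). Feeding $U'\subset\subset U$ into Proposition~\ref{proposition4} yields an open neighborhood $\Omega$ of $f(M)$ such that the implication ``$h(K\cup g(S))\subset U'\cup\Omega \Rightarrow h(K\cup g(S))\subset U$'' holds for $g=f$ and for every sufficiently small $C^1$-perturbation $g=\tilde f$ of $f$. Using $K\subset U'$, $f(S)\subset f(M)\subset\Omega$, and $h(K\cup f(S))\subset U'$, one has $\widehat{K\cup f(S)}=(K\cup f(S))\cup h(K\cup f(S))\subset U'\cup\Omega$.

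Next I would invoke the standard upper semicontinuity of the polynomial hull: if $L_n\to L$ in Hausdorff distance inside a fixed ball of $\C^n$, then $\widehat{L_n}$ is eventually contained in every open neighborhood of $\widehat L$ (proved by extracting a convergent subsequence of any alleged bad points and using $|P(p)|=\lim|P(p_n)|\le\limsup_n\sup_{L_n}|P|=\sup_L|P|$ for every polynomial $P$). Since $\tilde f\to f$ in $C^0$ forces $K\cup\tilde f(S)\to K\cup f(S)$ in Hausdorff distance, for $\tilde f$ close enough to $f$ one obtains $\widehat{K\cup\tilde f(S)}\subset U'\cup\Omega$, and a fortiori $h(K\cup\tilde f(S))\subset U'\cup\Omega$. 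Applying Proposition~\ref{proposition4}(2) to $\tilde f$, for $\tilde f$ also $C^1$-close enough that part (2) applies, upgrades this to $h(K\cup\tilde f(S))\subset U$, which is what we want.

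The main obstacle in this route is already hidden inside Proposition~\ref{proposition4}: the neighborhood $\Omega$ encodes the local polynomial convexity of the totally real submanifold $f(M)$, typically realized through a strictly plurisubharmonic function vanishing on $f(M)$ together with a maximum-principle argument that absorbs hull points straying near $f(M)$ back into the manifold itself. Taking that proposition as a black box, the only remaining nuance is to reconcile the two smallness thresholds: one needed to make $K\cup\tilde f(S)$ Hausdorff-close enough to $K\cup f(S)$ for the semicontinuity step, and one needed to make $\tilde f$ $C^1$-close enough to $f$ for Proposition~\ref{proposition4}(2) to apply; the asserted constant $c$ is then the minimum of the two.
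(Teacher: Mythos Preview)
The paper does not give a proof of Lemma~\ref{corollary2}; it simply quotes the result from \cite[Corollary~2]{LW}. So there is no in-paper argument to compare against, and your proposal should be read as an independent proof sketch.

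Your route---upper semicontinuity of the polynomial hull to place $\widehat{K\cup\tilde f(S)}$ inside $U'\cup\Omega$, then Proposition~\ref{proposition4}(2) to absorb the $\Omega$-part and land in $U$---is sound and is the natural way to deduce this corollary from Proposition~\ref{proposition4}. Two minor remarks. First, Proposition~\ref{proposition4} is stated with $U'\subset\subset U\subset\subset\mathbb C^n$, so one should first replace the given $U$ by a bounded open subset still containing the compact set $K\cup h(K\cup f(S))$; this is harmless. Second, your handling of the case $K\not\subset U$ (``enlarge $U$ by a small neighbourhood $W$ of $K$ and shrink back at the end'') is not actually justified: after the enlargement the argument only yields $h(K\cup\tilde f(S))\subset U\cup W$, and there is no evident mechanism to remove the extra $W$, since the constant $c$ produced depends on $W$. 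This is not a practical issue---every invocation of Lemma~\ref{corollary2} in the paper has $U$ a neighbourhood of $K$ (or of $K'$), as you yourself note---but as a proof of the lemma exactly as stated, that reduction step is a gap.
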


\noindent{\sl Proof of Lemma \ref{littlehull}:} 
The proof is in two steps. Let $I$ denote the interval $[0,1]\subset \R$.\\

\noindent{\sf First Step:} 
The first step is proving the lemma in the case $M=I^d$.
The proof is by induction on $d$.
Fix a strictly positive $d\in \N$, and assume that the result holds for embeddings of $I^{d-1}$.

 Since $K$ is polynomially convex, it admits fundamental system of open neighborhoods which are Runge  and Stein open sets. Thus there exists a Runge  and Stein  open set $U'\subset \C^n$ such that $K\subset U'\subset\subset  U$. Since $U'$  is Runge  and Stein, it admits a normal exhaustion by polynomially convex subsets. Hence there exists a polynomially convex set $K'\subset U'$ such that  $K\subset\mathrm{int}(K')$.
Let us fix some notation. 
\begin{enumerate}
\item for  $J\in \N$ and $\alpha\in \N^d,$ $0\leq \alpha_j\leq J-1$, we denote by $I^J_\alpha$ the cube
$$I^J_\alpha:=\left[ \frac{\alpha_1}{J},\frac{\alpha_1+1}{J}     \right]\times \dots \times  \left[ \frac{\alpha_d}{J},\frac{\alpha_d+1}{J}     \right],$$
\item for $1\leq m\leq d$, $0\leq j\leq J$, we denote by $G^J_{m,j}$ the $(d-1)$-dimensional cube
$$G^J_{m,j}:=I\times \dots \times I\times \underset{\mbox{$m$-th index}}{\left\{   \frac{j}{J}  \right\}} \times I\dots \times I,$$
\item we denote by $G^J$ the grid which is the union of all $(d-1)$-dimensional cubes $$G^J:=\bigcup_{m,j} G^J_{m,j},$$
\item we denote by $G^J(\delta)$ the closed $\delta$-neighborhood of the grid $G^J$,
$$G^J(\delta):=\{x\in I^d\colon {\rm dist}(x,G^J)\leq \delta\},$$
\item we denote by $Q^J_\alpha(\delta)\subset I^J_\alpha$ the smaller $d$-dimensional cube $$Q^J_\alpha(\delta):= \overline{I^J_\alpha\setminus G^J(\delta)},$$
\item  we denote by $S^J$ an $n$-tuple  of distinct cubes $I^J_\alpha.$
\end{enumerate}

We will prove the following statement. There exists a big enough $J$, a small enough $\delta$ and a  totally real $C^\infty$  embedding $f_\varepsilon\colon I^d\to \C^n$ such that
\begin{itemize}
\item[(i)] $\|f_\varepsilon-f\|_{C^k(I^d)}<\varepsilon$,
\item[(ii)]  $f_\varepsilon-f=0$  in a neighborhood of $f^{-1}(K)$, 
\item[(iii)] $ f_\varepsilon(I^J_\alpha)\cap K\neq \varnothing \Rightarrow  f_\varepsilon(I^J_\alpha)\subset K'$,
\item[(iv)] $h(K'\cup f_{\varepsilon}(S^J) \cup f_{\varepsilon}(G^J(\delta )))\subset U$ for any choice of $n$-tuple $S^J$,
\item[(v)] for all $\alpha$ such that $f_\varepsilon(Q_\alpha^J(\delta))\subset \C^n\setminus K,$ there exists an entire function $g_\alpha\in \mathcal{O}(\C^n)$ such that $f_\varepsilon(Q_\alpha^J(\delta))\subset \{g_\alpha=0\}=:Z(g_\alpha),$
\item[(vi)]  for all choices of $n+1$ different multi-indices $\alpha^1,\dots, \alpha^{n+1}$   such that $f_\varepsilon(Q_\alpha^J(\delta))\subset \C^n\setminus K$ we have $\bigcap_{1\leq j\leq n+1 }Z(g_{\alpha^j})=\varnothing$.
\end{itemize}
Once this is proved, the first step of the  proof goes as follows. Assume that $$q\in [K\cup f_{\varepsilon }(I^d)\widehat{ ]}.$$
Let $\mu$ be a representative Jensen measure for the linear functional on $\mathcal{P}(K\cup f_{\varepsilon }(I^d))$  defined as the evaluation at $q$, such that 
for all entire functions $g\in \mathcal{O}(\C^n)$ we have 
\begin{equation}\label{x}
\log |g(q))|\leq \int \log |g| {\rm d}\mu.
\end{equation}
Assume that there exists $\alpha$ such that  $\mu(f_\varepsilon(Q_\alpha^J(\delta)))>0$ and $f_\varepsilon(Q_\alpha^J(\delta))\cap K=\varnothing.$ By (v) we get that 
$\int \log|g_\alpha|{\rm d}\mu=-\infty$, and by (\ref{x}) we have that $q\in Z(g_\alpha)$.
By (vi), there exists an $n$-tuple of cubes $S^J$ such that the measure $\mu$ is concentrated on the subset $$K'\cup f_\varepsilon (S^J)\cup f_\varepsilon (G^J(\delta)).$$
It follows that $q\in [K'\cup f_\varepsilon (S^J)\cup f_\varepsilon (G^J(\delta))\widehat ],$ and by (iv) we obtain that $q\in U\cup f_\varepsilon(I^d)$. \

 We proceed to prove the statement above.

\noindent\emph{Claim 1:}
If $J$ is big enough then 
\begin{equation}\label{base}
h(K'\cup f(S^J))\subset U'
\end{equation} 
for any choice of $n$-tuple $S^J$. Moreover, for a fixed big enough $J$, if $f_0$ is a sufficiently small $C^1$-perturbation of $f$, then (\ref{base}) holds with $f_0$ in place of $f$.

\noindent\emph{Proof of Claim 1:}
Let $U''$ be an open neighborhood of $K'$ with $U''\subset \subset U'$. Let $\Omega'$ be given by Proposition \ref{proposition4} with data $(f,U',U'')$. 
Assume by contradiction that there is a sequence $(J_t)_{t\in \N}$, $J_t\to\infty$ and a sequence $S^{J_t}$ of $n$-tuples such that 
$$h(K'\cup f(S^{J_t}))\not\subset U'.$$
Let $I^{J_t}_{\alpha^1(t)}, \dots, I^{J_t}_{\alpha^n(t)}$ be the cubes in $S^{J_t}$.
Up to passing to a subsequence we may assume that for any $1\leq j\leq n$, the image $f(I^{J_t}_{\alpha^j(t)})$ converges to a point $q_j\in M$ as $t\to\infty$.
Since $K'\cup\{q_j\}_{1\leq j\leq n}$ is polynomially convex, there exists a Runge and Stein neighborhood $V$ of $K'\cup\{q_j\}_{1\leq j\leq n}$ such that $V\subset U''\cup \Omega'$.
For large enough $t$ we have that $K'\cup f(S^{J_t})\subset V$, which implies $h(K'\cup f(S^{J_t}))\subset U''\cup\Omega',$ and thus by  Proposition \ref{proposition4} we have a contradiction. 

Since there are only a finite number of $n$-tuples $S^J$ for a fixed $J$, the perturbation claim follows from Lemma \ref{corollary2}.
\noindent\emph{End proof of Claim 1.}\\

Choose $J$ big enough to ensure that Claim 1 holds and that for all $\alpha,$ $$f(I^J_\alpha)\cap K\neq \varnothing\Rightarrow f(I^J_\alpha)\subset \mathrm{int}(K') .$$

\noindent\emph{Claim 2:}
There exists a totally real $C^\infty$ embedding $f_{\frac{\varepsilon}{2}}\colon I^d\to \C^n$ such that
\begin{enumerate}
\item $\|f_{\frac{\varepsilon}{2}}-f\|_{C^k(I^d)}<\frac{\varepsilon}{2},$ 
\item $f_{\frac{\varepsilon}{2}}=f$  in a neighborhood of $f^{-1}(K')$, 
\item $h(K'\cup f_{\frac{\varepsilon}{2}}(S^J) \cup f_{\frac{\varepsilon}{2}}(G^J))\subset U'$ for any choice of $n$-tuple $S^J$.
\end{enumerate}
Moreover, if $f_0$ is a sufficiently small $C^1$-perturbation of $f_{\frac{\varepsilon}{2}}$, then (3) holds with $f_0$ in place of $f_{\frac{\varepsilon}{2}}$.

\noindent\emph{Proof of Claim 2:}
Fix an $n$-tuple $S^J$. Choose an ordering of the $(d-1)$-dimensional cubes $(G^J_{m,j})_{m,j}$ and denote them by $G_1,\dots ,G_\ell$.

Fix $0<\eta<<1$ (to be determined later). We will construct  inductively a family of totally real $C^\infty$ embeddings $(f_j\colon I^d\to \C^n)_{0\leq j\leq \ell}$  such that 
\begin{itemize}
\item[(a)]$\|f_j-f_{j-1}\|_{C^k(I^d)}\leq  \eta$,
\item[(b)] $f_j=f_{j-1}$  in a neighborhood of $f^{-1}(K')\cup S^J\cup \bigcup_{1\leq i\leq j-1}G_i$,
\item [(c)] $h\left(K'\cup f_j(S^J)\cup \bigcup_{1\leq i\leq j}f_j(G_i)\right)\subset U'.$
\end{itemize}
If we set $f_0:=f$, then (c) is true by Claim 1.
Assume we constructed $f_j$, where $0\leq j\leq \ell-1$.
Then by (c) the set $h(K'\cup f_j(S^J)\cup \bigcup_{0\leq i\leq j}f_j(G_i))$ is compact in $U'$, so there exists a set $U''_1$ such that 
$K'\subset U''_1 \subset \subset U'$ satisfying $h(K'\cup f_j(S^J)\cup \bigcup_{1\leq i\leq j}f_j(G_i))\subset U''_1$.
Let $\Omega_1$ be given by Proposition \ref{proposition4} with data $(f_j, U',U''_1)$.
Consider the  totally real $C^\infty$ embedding $f_j|_{G_{j+1}}\colon G_{j+1}\to \C^n$.
Since $$[  K'\cup f_j(S^J)\cup \bigcup_{0\leq i\leq j}f_j(G_i)\widehat{]}\subset  U''_1\cup \Omega_1,$$
by the inductive assumption  (the result holds for embeddings of $I^{d-1}$) there exists a totally real $C^\infty$ embedding $\tilde f_j|_{G_{j+1}}\colon G_{j+1}\to \C^n$ such that
\begin{enumerate}
\item $\|\tilde f_j-f_j\|_{C^k(G_{j+1})}$ is small, 
\item $\tilde f_j=f_j$  in a neighborhood of $f_j^{-1}(K')\cup S^J\cup \bigcup_{1\leq i\leq j}G_i$,
\item $h\left( K'\cup f_j(S^J)\cup \bigcup_{1\leq i\leq j}f_j(G_i)\cup \tilde f_j(G_{j+1})\right)\subset U''_1\cup \Omega_1$.
\end{enumerate}
If $\|\tilde f_j-f_j\|_{C^k(G_{j+1})}$ is small enough, then we may extend $\tilde f_j$ to a totally real $C^\infty$ embedding $f_{j+1}\colon I^d\to \C^n$ such that
$\| f_{j+1}-f_j\|_{C^k(I^d)}\leq \eta$, which  coincides with $f_j$ on $f^{-1}(K')\cup S^J\cup \bigcup_{1\leq i\leq j}G_i.$
By the choice of $\Omega_1$ it follows that $$h( K'\cup f_{j+1}(S^J)\cup \bigcup_{1\leq i\leq j+1}f_{j+1}(G_i))\subset U'.$$
The mapping $f_\ell$ satisfies Claim 2 for the chosen $n$-tuple $S^J$.

We then iterate this argument for every $n$-tuple $S^J$ (choosing $\eta$ small enough),  and we obtain $f_{\frac{\varepsilon}{2}}$. The perturbation claim follows from Lemma \ref{corollary2}.

\noindent\emph{End proof of Claim 2.}\\

Let $\Omega$ be an open neighborhood of $f_{\frac{\varepsilon}{2}}(I^d)$ given by Proposition \ref{proposition4} with data $(f_{\frac{\varepsilon}{2}},U,U')$.

\noindent\emph{Claim 3:}
There exists a $\delta>0$ such that
\begin{equation}\label{claim3eq}
h(K'\cup f_{\frac{\varepsilon}{2}}(S^J) \cup f_{\frac{\varepsilon}{2}}(G^J(\delta)))\subset U
\end{equation} for any choice of $n$-tuple $S^J$.
Moreover, if $f_0$ is a sufficiently small $C^1$-perturbation of $f_{\frac{\varepsilon}{2}}$, then (\ref{claim3eq}) holds with $f_0$ in place of $f_{\frac{\varepsilon}{2}}$.

\noindent\emph{Proof of Claim 3:}
Fix an $n$-tuple $S^J$. Since $h(K'\cup f_{\frac{\varepsilon}{2}}(S^J) \cup f_{\frac{\varepsilon}{2}}(G^J))\subset U'$, there exists a Runge and Stein neighborhood $V\subset \subset \Omega\cup U'$ of $K'\cup f_{\frac{\varepsilon}{2}}(S^J) \cup f_{\frac{\varepsilon}{2}}(G^J)$. If $\delta>0$ is small enough we have that
$K'\cup f_{\frac{\varepsilon}{2}}(S^J) \cup f_{\frac{\varepsilon}{2}}(G^J(\delta))\subset \subset V$, and thus 
$h(K'\cup f_{\frac{\varepsilon}{2}}(S^J) \cup f_{\frac{\varepsilon}{2}}(G^J(\delta))) \subset U$.
Since the number of $n$-tuples is finite, we can choose a $\delta$ which works for all of them.  The perturbation claim follows from Lemma \ref{corollary2}.
\
 \noindent\emph{End proof of Claim 3.}\\

We end Step 1  by pushing each $f_{\frac{\varepsilon}{2}}(Q^J_\alpha(\delta))$ with $f_{\frac{\varepsilon}{2}}(Q^J_\alpha(\delta))\cap K=\varnothing$ into an analytic hypersurface of $\C^n$.

 Fix a multi-index $\alpha$ such that $f_{\frac{\varepsilon}{2}}(Q^J_\alpha(\delta))\cap K=\varnothing$.
We  identify the cube $I^J_{\alpha}$ with  the cube centred at the origin in $ \mathbb R^d$, with the decomposition $\mathbb R^d\oplus i\mathbb R^d\oplus\mathbb C^{n-d}$ of $\mathbb C^n$, and  
after an affine linear change of coordinates we may assume that $f_{\frac{\varepsilon}{2}}:I^J_\alpha\rightarrow\mathbb C^n$
satisfies $f_{\frac{\varepsilon}{2}}(0)=0$, with the tangent space of $f_{\frac{\varepsilon}{2}}(I^J_\alpha)$ at the origin 
equal to $\mathbb R^d$, and with $Df_{\frac{\varepsilon}{2}}(0)$ orientation preserving.   \

We now claim that we may approximate $f_{\frac{\varepsilon}{2}}$ arbitrarily well in $C^k$-norm on $I^J_\alpha$ by 
holomorphic automorphisms  $G_\alpha$ of $\mathbb C^n$.   Granted this for a moment, we proceed as follows.  
Let $\chi$ be a cutoff function with compact support in $\mathrm{int}(I^J_\alpha)$ such that $\chi\equiv 1$   in a neighborhood of
$Q^J_\alpha(\delta)$.  Define 
\begin{equation}
f_{\alpha}(x):= f_{\frac{\varepsilon}{2}}(x) + \chi(x)(G_\alpha(x)-f_{\frac{\varepsilon}{2}}(x)).
\end{equation} 

If we define $g_\alpha$ as the $n$-th coordinate of $G_\alpha^{-1}$, then clearly $$g_\alpha(f_\alpha(Q_\alpha^J(\delta)))=0.$$
Notice that since $f_\alpha=f_{\frac{\varepsilon}{2}}$ on $I^d\setminus \mathrm {int}(I^J_\alpha)$, the maps $f_\alpha$ patch up as $\alpha$ varies among all  multi-indices $\alpha$ such that $f_{\frac{\varepsilon}{2}}(Q^J_\alpha(\delta))\cap K=\varnothing$, and we obtain an embedding $f_\varepsilon\colon I^d\to \C^n$ satisfying the properties (i)-(v).

{

Finally, by the following sublemma, we may assume, possibly having to perturb the automorphisms $G_\alpha$ slightly,  that the intersection $\bigcap_{1\leq j\leq n+1 }Z(g_{\alpha^j})$ of any collection of $n+1$ zero sets is empty. Thus property (vi) is satisfied.

\begin{sublemma}
Let $\{g_j\}_{1\leq j\leq N}\subset\mathcal O(\mathbb C^n)$ be a finite collection of non-constant holomorphic functions, and for 
each $a_j\in\mathbb C$ set $g_j^{a_j}:=g_j-a_j$.  Then there exists a dense $G_\delta$ set $A\subset\mathbb C^N$
such that for each $a\in A$ the following holds: for any collection $\{g_{i_1}^{a_{i_1}},...,g_{i_{n+1}}^{a_{i_{n+1}}}\}$ with 
$i_j\neq i_k$ if $j\neq k$, we have that 
\begin{equation}\label{intersection}
\bigcap_{l=1}^{n+1}Z(g_{i_l}^{a_{i_l}})=\varnothing.  
\end{equation}
\end{sublemma}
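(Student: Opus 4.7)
The plan is to use a standard Baire category argument combined with the basic dimension-count observation that a holomorphic map from $\mathbb C^n$ to $\mathbb C^{n+1}$ has image of Lebesgue measure zero. First, for each choice of indices $1\leq i_1<\cdots<i_{n+1}\leq N$ I would consider the holomorphic map
\[
G_{i_1,\dots,i_{n+1}}\colon\mathbb C^n\to\mathbb C^{n+1},\qquad z\mapsto (g_{i_1}(z),\dots,g_{i_{n+1}}(z)).
\]
The condition \eqref{intersection} for the tuple $(i_1,\dots,i_{n+1})$ is exactly the condition that $(a_{i_1},\dots,a_{i_{n+1}})$ does not lie in the image of $G_{i_1,\dots,i_{n+1}}$. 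So the statement reduces to showing that a generic $a\in\mathbb C^N$ avoids the preimage, under the projection $\pi_{i_1,\dots,i_{n+1}}\colon\mathbb C^N\to\mathbb C^{n+1}$, of the image of $G_{i_1,\dots,i_{n+1}}$.

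Next I would exhaust $\mathbb C^n$ by the closed balls $B_r:=\overline{B(0,r)}$, and for each $r\in\mathbb N$ and each $(n+1)$-subset $I=\{i_1,\dots,i_{n+1}\}$ put
\[
A_{I,r}:=\bigl\{a\in\mathbb C^N:(a_{i_1},\dots,a_{i_{n+1}})\notin G_{I}(B_r)\bigr\}.
\]
Since $G_I(B_r)$ is compact, $A_{I,r}$ is open in $\mathbb C^N$. The key point is that $G_I(B_r)$ has Lebesgue measure zero in $\mathbb C^{n+1}$: this follows from Sard's theorem, because every point of $\mathbb C^n$ is a critical point of $G_I$ for dimensional reasons ($\dim_{\mathbb R}\mathbb C^n=2n<2n+2=\dim_{\mathbb R}\mathbb C^{n+1}$), so every point in the image is a critical value. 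Its preimage under $\pi_I$ therefore has measure zero in $\mathbb C^N$, and being closed, is nowhere dense; hence $A_{I,r}$ is open and dense.

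Finally, I would set
\[
A:=\bigcap_{I}\bigcap_{r\in\mathbb N}A_{I,r},
\]
which is a countable intersection of open dense sets, hence a dense $G_\delta$ by the Baire category theorem. For any $a\in A$ and any $(n+1)$-subset $I$, the point $(a_{i_1},\dots,a_{i_{n+1}})$ lies outside $G_I(B_r)$ for every $r$, hence outside $G_I(\mathbb C^n)$, which is exactly \eqref{intersection}. There is really no obstacle here beyond identifying the correct measure-zero fact; the only step that requires a moment of thought is the Sard/dimension argument that $G_I(B_r)$ has measure zero in $\mathbb C^{n+1}$, and everything else is a routine Baire argument.
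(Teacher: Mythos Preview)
Your proof is correct, and it is cleaner than the paper's own argument. The key observation that $\bigcap_{l=1}^{n+1}Z(g_{i_l}^{a_{i_l}})=\varnothing$ is equivalent to $(a_{i_1},\dots,a_{i_{n+1}})\notin G_I(\mathbb C^n)$ reduces everything to a single measure-zero fact, and the Sard/dimension argument (a $C^1$ map $\mathbb R^{2n}\to\mathbb R^{2n+2}$ has image of measure zero) together with compactness of $G_I(B_r)$ immediately gives openness and density of each $A_{I,r}$.

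The paper takes a more hands-on, complex-analytic route: fixing $R>0$ and a multi-index $I=(i_1,\dots,i_{n+1})$, it perturbs the constants $a_{i_l}$ one at a time, using that $Z(g_{i_1}^{a_{i_1}})\cap\mathbb B^n_R$ has finitely many irreducible components of dimension $n-1$, that a generic choice of $a_{i_2}$ makes $g_{i_2}^{a_{i_2}}$ non-vanishing on each component so the intersection drops to dimension $\le n-2$, and so on until after $n+1$ steps the intersection is empty. Openness is then obtained by continuity, and one iterates over all multi-indices $I$ (being careful that constants already chosen are only perturbed within previously established tolerances), finally intersecting over $R\in\mathbb N$. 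Your approach bypasses all of this bookkeeping: by packaging the $(n+1)$ functions into a single map to $\mathbb C^{n+1}$ you replace the inductive dimension count for analytic sets with one global invocation of Sard, and the Baire argument becomes entirely routine. As a bonus, your proof does not use holomorphicity (or even non-constancy) of the $g_j$ at all---smoothness suffices---whereas the paper's argument genuinely relies on the local structure theory of complex analytic sets.
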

\begin{proof}


Let $a=(a_1,....,a_N)\in\mathbb C^N$ be an arbitrary point.   Let $\{I_i\}_{i=1}^M=\{(i_1,...,i_{n+1})\}_{i=1}^M$ be the collection 
of vectors with all possible combinations, without repetition, of $n+1$ of the indices $\{1,....,N\}$.  Let $\delta>0$ and fix $R>0$. 
We consider first the multi-index $I_1$.   The set $Z(g^{a_{1_1}}_{1_1})\cap\mathbb B^n_{R}$ consists 
of a finite number of irreducible components, each of dimension $n-1$.   
Denote by $\triangle_{\delta}(p)\subset \C$ the disc of center $p$ and radius $\delta $, and by $\B^n_R\subset \C^n$ the ball of radius $R$ centered at the origin. Choosing $a^1_{1_2}\in\triangle_{\delta}(a_{1_2})$
such that $g^{a^1_{1_2}}_{1_2}$ is not identically zero on the regular part of any of these components, we get that 
\begin{equation}
Z(g^{a^1_{1_1}}_{1_1})\cap Z(g^{a^1_{1_2}}_{1_2})\cap\mathbb B^n_{R}
\end{equation}
has a finite number of irreducible components, each of dimension at most $n-2$, where we have set $a_{1_1}^1=a_{1_1}$.
Choosing $a^1_{1_3}\in\triangle_\delta(a_{1_3})$ such that $g^{a^1_{1_3}}_{1_3}$ is not identically zero 
on any of these components, and continuing in this fashion, we obtain a collection of points $\{a^1_{1_l}\}, l=1,...,n+1,$
such that 
\begin{equation}\label{isect}
\bigcap_{l=1}^{n+1}Z(g_{1_l}^{a^1_{1_l}})\cap\mathbb B^n_{R}=\varnothing, 
\end{equation}
and such that $a^1_{1_l}\subset\triangle_\delta(a_{1_l})$.  Finally, by continuity, \eqref{isect}
is an open condition on the constants $a^1_{1_l}$, so we may choose $\delta_1>0$
such that $ \triangle_{\delta_1}(a^1_{1_l})\subset\triangle_\delta(a_{1_l})$ such that \eqref{isect}
holds with each $a^1_{1_l}$ replaced by an arbitrary $\tilde a_{1_l}\in\triangle_{\delta_1}(a^1_{1_l})$. \

Based on this we continue an inductive construction, and we
assume that we have constructed points $\{a_{i_l}^k\}$ for $i=1,...,k$ and $l=1,...,n+1$, and $\delta_k>0$, such
that $\triangle_{\delta_k}(a^k_{i_l})\subset \triangle_\delta(a_{i_l})$, and such that 
\begin{equation}
\bigcap_{l=1}^{n+1}Z(g_{i_l}^{\tilde a_{i_l}})\cap\mathbb B^n_R=\varnothing,  
\end{equation}
for $i=1,...,k$, and $\tilde a_{i_l}\subset \triangle_{\delta_k}(a^k_{i_l})$.  Note that points may appear with repetition 
in the collection $\{a^k_{i_l}\}$.  \

We now consider the multi-index $\{(k+1)_1,...,(k+1)_{n+1}\}$.   As we have seen, the condition corresponding to \eqref{isect}
is generic for a single set of $n+1$ functions.  So we may choose points $a_{{(k+1)}_l}^{k+1}$
such that $a_{{(k+1)}_l}^{k+1}\subset \triangle_{\delta_k}(a^k_{{(k+1)}_l})$ if the index $(k+1)_l$
appeared already in the construction, and $a_{{(k+1)}_l}^{k+1}\subset \triangle_{\delta}(a_{{(k+1)}_l})$
if the index did not appear yet, and such that 
\begin{equation}
 \bigcap_{l=1}^{n+1}Z(g_{(k+1)_l}^{a^{k+1}_{(k+1)_l}})\cap\mathbb B^n_{R}=\varnothing.
\end{equation}
 Relabel the remaining points $a_{i_l}^k, i=1,...,k, l=1,...,n+1$, with a superscript $k+1$.
By continuity, we may choose $\delta_{k+1}$ to reproduce our induction hypothesis with $k+1$ in place of $k$.  \

We conclude that for each $R\in\mathbb N$ there exists an open dense set $A_R\subset\mathbb C^N$
such that
\begin{equation}
\bigcap_{l=1}^{n+1}Z(g_{i_l}^{a_{i_l}}) \cap \mathbb B^n_R=\varnothing  
\end{equation}
for all $a\in A_R$ and all $(i_1,...,i_{n+1})$ without repetition.  Set $A=\cap_R A_R$.

\end{proof}

To produce $G_\alpha$ it suffices by  \cite[Main Theorem]{Forstneric} to produce a piecewise smooth isotopy of embeddings   $(G_t\colon I^J_\alpha\to \C^n)_{t\in [0,1]}$ connecting the identity map  $G_0={\sf id}_{I^J_\alpha}$ with the map   $G_1=f_{\frac{\varepsilon}{2}}$, such that $G_t(I^J_\alpha)$ is totally real and polynomially 
convex for all $t$. 
 
 Notice that, if $J$ is big enough, then  $f(I^J_\alpha)$ is polynomially convex  \cite[Proposition 4.2]{MWO}. Once $J$ is fixed, it follows by \cite[Corollary 1]{LW} that if $ f_{\frac{\varepsilon}{2}}$ is a small enough $C^1$-perturbation of $f$, then  $ f_{\frac{\varepsilon}{2}}(I^J_\alpha)$ is polynomially convex. 
Define an isotopy
$f_{\frac{\varepsilon}{2}}((1-t)x)$ for $t\in [0,1-\eta]$, where  $\eta>0$ is to be chosen. Then   $f_{\frac{\varepsilon}{2}}(t I^J_\alpha)$ is totally real and polynomially convex for all $t$. 

 Since any sufficiently small $C^1$-perturbation of $I^J_\alpha$ is polynomially convex and totally real,  if $\eta>0$ is small enough, we can choose a neighborhood $W$ of the origin in  $f_{\frac{\varepsilon}{2}}(I^J_\alpha)$ and an isotopy of embeddings $(\pi_t\colon W\to \C^d)_{t\in [0,1]}$ 
such that 
\begin{enumerate}
\item $f_{\frac{\varepsilon}{2}}(\eta I^J_\alpha)\subset W$,
\item $\pi_0={\sf id}_{W}$ and $\pi_1$ is the projection to $\mathbb R^d$,
\item $\pi_t (f_{\frac{\varepsilon}{2}}(\eta I^J_\alpha))$ is totally real and polynomially convex for all $t\in [0,1].$
\end{enumerate} 

We have  thus connected $f_{\frac{\varepsilon}{2}}$ to an orientation preserving embedding $H:I^J_\alpha\rightarrow\mathbb R^d$
with $H(0)=0$.  Now $H$ is connected to $DH(0)$ via the path $\frac{1}{t}H(tx)$, and then 
$DH(0)$ is connected to the identity map  ${\sf id}_{I^J_\alpha}$.

\noindent{\sf Second Step:} 

For simplicity we assume that $M$ is without boundary.  
Let $I_1,...,I_N$ be a cover of $M$ by closed cubes, such that there is a collection of subcubes $I_i^0\subset\subset \mathrm{int}(I_i)$
which also cover $M$.  For each $i$  let $\chi_i$ be a cutoff function compactly supported in $\mathrm{int}(I_i)$ with 
$\chi_i\equiv 1$  in a neighborhood of $I_i^0$.   Let $U_j\subset\subset U_{j+1}\subset U$ be open sets for $j=1,...,N$, and let 
$\Omega$ be given by Proposition \ref{proposition4}  for all the  data $(f(M),U_{j+1},U_j)$  with $j=1,...,N$.
We will proceed to perturb the image $f(M)$ cube by cube.\

Assume that we have constructed a small $C^k$-perturbation $f_j:M\rightarrow\mathbb C^n$ of $f$ such that 
$$
h(K\cup f_j(\cup_{i=1}^j I^0_i))\subset U_j
$$
(it will be clear from the construction how to construct $f_1$). 

By Step 1 we let $g_{j+1}:I_{j+1}\rightarrow\mathbb C^n$ be a small $C^k$-perturbation of $f_j$ such that 
$$
h(K\cup f_j(\cup_{i=1}^j I^0_i)\cup g_{j+1}(I_{j+1}))\subset \Omega\cup U_j,
$$
and $g_{j+1}$ coincides with $f_j$ in a neighbourhood of  $\cup_{i=1}^j I_i^0\cup f_j^{-1}(K)$ .
Defining $f_{j+1}$ by
$$
f_{j+1} := f_j + \chi_{j+1}(g_{j+1}-f_j),
$$
 we obtain $$h(K\cup f_{j+1}(\cup_{i=1}^{j+1} I^0_i))\subset U_{j+1}.$$
 If all the perturbations were small enough, defining $f_\varepsilon:=f_{N}$ yields the result.
$\hfill\square$

\section{Proof of Theorem \ref{nohull}}\label{proofs}

 Let $\varepsilon>0$. Let $(U_j)_{j\geq 1}$ be a sequence of open neighborhoods of $K$ such that $U_{j+1}\subset U_{j}$ for all $j\geq 1$ and $K=\bigcap_{j\geq 1}U_j.$ Set $f_0:= f$.
We perturb $f_0$ inductively using Lemma \ref{littlehull}, obtaining a family of totally real $C^\infty$ embeddings $(f_j\colon M\to \C^n)_{j\in \N}$ such that,
\begin{enumerate}
\item for all $j\geq 1$ we have $\widehat{K\cup f_j(M)}\subset U_j\cup f_j(M),$ 
\item for all $j\in \N$ we have  $$\|f_{j+1}-f_j\|_{C^{k+j}(M)}< \eta_j,$$ where the sequence $(\eta_j)$ satisfies for all $\ell\in \N$, $$\sum_{j=\ell}^\infty \eta_j<\delta_\ell,$$ where  $0<\delta_0\leq \varepsilon$ is to be chosen  and for all $\ell\geq 1$, $\delta_\ell$ 
  is the constant  $c(f_\ell,K, U_\ell)$ given by Lemma \ref{corollary2},
\item for all $j\in \N$ we have $f_{j+1}\equiv f_j$ on $f^{-1}(K)$.
\end{enumerate}
The sequence $(f_j)$ clearly converges to a $C^\infty$ map $f_\varepsilon \colon M\to \C^n$ which coincides with $f$ on $f^{-1}(K)$, and such that 
$\|f-f_\varepsilon\|_{C^k(M)}<\delta_0$.
 If $\delta_0$ is small enough, then $f_\varepsilon$ is a totally real $C^\infty$ embedding. We claim that for all $\ell\geq 1$, $$\widehat{K\cup f_\varepsilon(M)}\subset U_\ell\cup f_\varepsilon(M).$$ This follows from Lemma \ref{corollary2}, since $$\|f_\varepsilon-f_\ell\|_{C^1(M)}\leq  \sum_{j=\ell}^\infty \eta_j<\delta_\ell.$$ $\hfill\square$

\bibliographystyle{amsplain}

\begin{thebibliography}{10}

\bibitem{Alexander}
H. Alexander,
{\sl Hulls of subsets of the torus in $\mathbb C^2$},  
Ann. Inst. Fourier (Grenoble) {\bf 48} (1998), no. 3, 785--795. 


\bibitem{Forstneric}
F. Forstneric, 
{\sl Approximation by automorphisms on smooth submanifolds of $\mathbb C^n$}, 
Math. Ann. 300 (1994), 719--738.


\bibitem{ForstnericRosay}
F. Forstneric,  J.-P. Rosay, 
{\sl Approximation of biholomorphic mappings by automorphisms of $\mathbb C^n$}, Invent. Math. {\bf 112} (1993), no. 2, 323--349.


\bibitem{gupta}
P. Gupta, {\sl A nonpolynomially convex isotropic two-torus with no attached discs}, arXiv:1610.09356.



 \bibitem{hirsch}
M. W. Hirsch, {\sl Differential Topology}, Graduate Texts in Mathematics {\bf 33} (1976).


 \bibitem{muk}
A. Mukherjee, {\sl Differential Topology}, Birkh\"auser Basel (2015).


\bibitem{NY}
P. T. Ho, H. Jacobowitz, P. Landweber, {\sl Optimality for totally real immersions and independent mappings of manifolds into $\C^N$},
New York J. Math. {\bf 18} (2012) 463--477. 

\bibitem{ISW}
A. Izzo, H. Samuelsson Kalm, E. F. Wold,  {\sl Presence or absence of analytic structure in maximal ideal spaces}, Math. Ann. {\bf 366} (2016), no. 1-2, 459--478.




\bibitem{IzzoStout}
A. Izzo, E. L. Stout, {\sl Hulls of surfaces},  Indiana U. Math. J. (to appear).



\bibitem{LW} E. L\o w, E. F. Wold, {\sl Polynomial convexity and totally real manifolds}, Complex Var. Elliptic 
{\bf 54} (2009),  Nos. 3--4,  265--281.

 \bibitem{MWO}
P. E. Manne, E. F. Wold, N. \O vrelid, {\sl Holomorphic convexity and Carleman approximation by entire functions on Stein manifolds}, Math. Ann {\bf 351} (2011), 571--585.




\bibitem{stout}
E. L. Stout, {\sl Polynomial convexity}, Progress in Mathematics, {\bf 261} Birkh\"auser Boston, Inc., Boston, MA, 2007.



\end{thebibliography}

\end{document}